\theoremstyle{plain}
\newtheorem{theorem}{Theorem}[section]
\newtheorem{proposition}[theorem]{Proposition}
\newtheorem{corollary}[theorem]{Corollary}
\theoremstyle{definition}
\newtheorem{question}[theorem]{Question}
\newcommand{\dom}{\mathsf{dom}}
\newcommand{\ran}{\mathsf{ran}}
\newcommand{\maxi}{\mathsf{max}}
\newcommand{\id}{\mathsf{id}}
\newcommand{\cf}{\mathsf{cf}}
\newcommand{\Gd}{\mathsf{G_\delta}}
\newcommand{\CDH}{\mathsf{CDH}}
\newcommand{\CH}{\mathsf{CH}}
\newcommand{\ZFC}{\mathsf{ZFC}}
\newcommand{\MAsigma}{\mathsf{MA}(\sigma\text{-}\mathrm{centered})}
\newcommand{\MAc}{\mathsf{MA}(\mathrm{countable})}
\newcommand{\SLH}{\mathsf{SLH}}
\newcommand{\BB}{\mathcal{B}}
\newcommand{\CC}{\mathcal{C}}
\newcommand{\DD}{\mathcal{D}}
\newcommand{\HH}{\mathcal{H}}
\newcommand{\PPP}{\mathbb{P}}
\newcommand{\RRR}{\mathbb{R}}
\newcommand{\QQQ}{\mathbb{Q}}
\newcommand{\cccc}{\mathfrak{c}}
\begin{document}

\title{Products and countable dense homogeneity}

\author{Andrea Medini}

\address{Kurt G\"odel Research Center for Mathematical Logic
\newline\indent University of Vienna, W\"ahringer Stra{\ss}e 25, A-1090 Wien,
Austria}

\email{andrea.medini@univie.ac.at}

\urladdr{http://www.logic.univie.ac.at/\~{}medinia2/}

\date{June 10, 2014}

\thanks{The author acknowledges the support of the FWF grant I 1209-N25.}

\begin{abstract}
Building on work of Baldwin and Beaudoin, assuming Martin's Axiom, we construct
a zero-dimensional separable metrizable space $X$ such that $X$ is countable dense homogeneous while $X^2$ is
not. It follows from results of Hru\v{s}\'ak and Zamora Avil\'es that such a
space $X$ cannot be Borel. Furthermore, $X$ can be made homogeneous and
completely Baire as well.
\end{abstract}

\maketitle

\section{Introduction}

As it is common in the literature about countable dense homogeneity, by
\emph{space} we will always mean ``separable metrizable topological space''. By
\emph{countable} we will always mean ``at most countable''. For all undefined
topological notions, we refer to \cite{vanmilli}. Our reference for descriptive set theory is
\cite{kechris}. For all other set-theoretic notions, we refer to \cite{kunen}.
Given a space $X$, we will denote by $\HH(X)$ the group of homeomorphisms of
$X$. Recall that a space $X$ is \emph{countable dense homogeneous} (briefly,
$\CDH$) if for every pair $(A,B)$ of countable dense subsets of $X$ there exists
$h\in\HH(X)$ such that $h[A]=B$.

The fundamental positive result in the theory of $\CDH$ spaces is the following
(see \cite[Theorem 5.2]{andersoncurtisvanmill}). In particular, it shows that the Cantor
set $2^\omega$, the Baire space $\omega^\omega$, the Euclidean spaces $\RRR^n$,
the spheres $S^n$ and the Hilbert cube $[0,1]^\omega$ are all examples of $\CDH$
spaces. See \cite[\S14, \S15 and \S16]{arkhangelskiivanmill} for much more on this topic. Recall that a space is
\emph{strongly locally homogeneous} (briefly, $\SLH$) if there exists a base
$\BB$ for $X$ such that for every $U\in\BB$ and $x,y\in U$ there exists
$h\in\HH(X)$ such that $h(x)=y$ and $h\upharpoonright (X\setminus
U)=\id_{X\setminus U}$.

\begin{theorem}[Anderson, Curtis, Van Mill]
Every Polish $\SLH$ space is $\CDH$.
\end{theorem}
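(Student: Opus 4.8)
The plan is to obtain the required homeomorphism of $X$ as an infinite composition $h=\cdots\circ f_2\circ f_1\circ f_0$, where each $f_n\in\HH(X)$ is the identity outside a small basic open set: a back-and-forth bookkeeping will force $h[A]=B$, while the smallness of the supports will force the partial compositions $g_n:=f_n\circ\cdots\circ f_0$ \emph{and} their inverses to converge uniformly, so that the limit $h$ is genuinely a homeomorphism. Fix a complete metric $d\le 1$ on $X$ and a base $\BB$ for $X$ witnessing $\SLH$; we may assume $X$ is crowded, since isolated points lie in every dense set and only make matters easier. Enumerate the two given countable dense sets as $A=\{a_n:n\in\NNN\}$ and $B=\{b_n:n\in\NNN\}$. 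We recursively build $f_n\in\HH(X)$, sets $U_n\in\BB$, and increasing finite sets $F_n\subseteq A$, $G_n\subseteq B$ with $g_n[F_n]=G_n$, arranging $\bigcup_n F_n=A$ and $\bigcup_n G_n=B$.

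At even stages $n$ we enlarge the domain: let $a$ be the first element of $A\setminus F_{n-1}$. Since $g_{n-1}$ is a bijection and $a\notin F_{n-1}$ we have $g_{n-1}(a)\notin G_{n-1}$, so using continuity of $g_{n-1}^{-1}$ at $g_{n-1}(a)$ and the fact that $\BB$ is a base we may choose $U_n\in\BB$ with $g_{n-1}(a)\in U_n$, $U_n\cap G_{n-1}=\emptyset$, $\mathrm{diam}(U_n)<2^{-n}$ and $\mathrm{diam}(g_{n-1}^{-1}[U_n])<2^{-n}$. Pick any $b\in B\cap U_n$ (possible since $B$ is dense), let $\SLH$ furnish $f_n\in\HH(X)$ with $f_n(g_{n-1}(a))=b$ and $f_n\upharpoonright(X\setminus U_n)=\id$, and set $F_n=F_{n-1}\cup\{a\}$, $G_n=G_{n-1}\cup\{b\}$. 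At odd stages we enlarge the range symmetrically: let $b$ be the first element of $B\setminus G_{n-1}$, choose $U_n\in\BB$ with $b\in U_n$, $U_n\cap G_{n-1}=\emptyset$ and the same two diameter bounds, pick (using density of $A$ and crowdedness of $X$) some $a\in(A\cap g_{n-1}^{-1}[U_n])\setminus F_{n-1}$, take $f_n\in\HH(X)$ with $f_n(g_{n-1}(a))=b$ and $f_n\upharpoonright(X\setminus U_n)=\id$, and set $F_n=F_{n-1}\cup\{a\}$, $G_n=G_{n-1}\cup\{b\}$. In either case $U_n\cap G_{n-1}=\emptyset$ forces $f_n$ to fix $G_{n-1}$ pointwise, so $g_n[F_n]=G_n$, and a standard priority count ensures $\bigcup_n F_n=A$ and $\bigcup_n G_n=B$.

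For convergence, observe that for every $x$ the points $g_n(x)$ and $g_{n-1}(x)$ either coincide or both lie in $U_n$; since $\mathrm{diam}(U_n)<2^{-n}$, $(g_n)$ is uniformly Cauchy and converges uniformly to a continuous $h:X\to X$. Dually $g_n^{-1}=g_{n-1}^{-1}\circ f_n^{-1}$ and $f_n^{-1}\upharpoonright(X\setminus U_n)=\id$, so for every $y$ the points $g_n^{-1}(y)$ and $g_{n-1}^{-1}(y)$ either coincide (when $y\notin U_n$) or both lie in $g_{n-1}^{-1}[U_n]$ (when $y\in U_n$); since $\mathrm{diam}(g_{n-1}^{-1}[U_n])<2^{-n}$, $(g_n^{-1})$ converges uniformly to a continuous $k:X\to X$. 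A routine argument from these uniform convergences and the continuity of $h$ and $k$ gives $h\circ k=k\circ h=\id$, so $h\in\HH(X)$. Finally, once an element of $A$ enters some $F_m$, its $g_m$-image lies in $G_m$, hence in every later $G_{n-1}$, on which every later $f_n$ is the identity; thus $g_n$ is eventually constant on that element, and likewise for $B$ and the $g_n^{-1}$. Hence $h\upharpoonright A$ is a pointwise limit of eventually constant sequences and $h[A]=\bigcup_n G_n=B$. The one delicate point — and the reason for imposing the second diameter bound $\mathrm{diam}(g_{n-1}^{-1}[U_n])<2^{-n}$ — is exactly the passage from ``uniform limit of homeomorphisms'' to ``homeomorphism'': controlling only the supports $U_n$ would leave $(g_n^{-1})$ uncontrolled, and the limit $h$ could fail to be injective or open.
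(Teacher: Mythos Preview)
The paper does not prove this theorem; it is quoted from \cite{andersoncurtisvanmill} as background, so there is no ``paper's own proof'' to compare against. Your argument is correct and is precisely the classical back-and-forth construction one finds in the literature: compose local homeomorphisms supported on shrinking basic sets, controlling simultaneously $\mathrm{diam}(U_n)$ and $\mathrm{diam}(g_{n-1}^{-1}[U_n])$ so that both $(g_n)$ and $(g_n^{-1})$ are uniformly Cauchy for the complete metric, whence the limit is a genuine homeomorphism.

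One minor point deserves a sentence of justification rather than the parenthetical you give. The reduction to the crowded case is valid, but not merely because isolated points lie in every dense set: one must observe that in an $\SLH$ space the set $I$ of isolated points is \emph{clopen}. Indeed, if some $U\in\BB$ contained both an isolated point $x$ and a non-isolated point $y$, no homeomorphism of $X$ could send $x$ to $y$, contradicting $\SLH$; hence every basic set lies entirely inside $I$ or entirely inside $X\setminus I$. Thus $X=I\oplus(X\setminus I)$, the crowded part $X\setminus I$ is open (hence Polish) and inherits $\SLH$, and $A\setminus I$, $B\setminus I$ are dense there. Without this observation the phrase ``only make matters easier'' is not self-evidently true. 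Alternatively, as you implicitly note, the recursion runs unchanged in the presence of isolated points: when $g_{n-1}(a)$ (or $b$) is isolated one may take $U_n$ to be the singleton and $f_n=\id_X$.
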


Using the fact that homeomorphisms permute the connected components, it is easy
to see that the product $2^\omega\times S^1$ is not $\CDH$. Therefore, countable
dense homogeneity is not productive, not even in the class of compact topological
groups. However, the situation improves if we restrict our attention to
zero-dimensional spaces. Let
$$
\CC=\{X:X\approx\kappa\oplus (\lambda\times 2^\omega)\oplus
(\mu\times\omega^\omega)\textrm{, where }0\leq\kappa,\lambda,\mu\leq\omega\}
$$
be the class of spaces that are homeomorphic to a countable disjoint sum of
copies of $2^\omega$, $\omega^\omega$ and $1$. Recall that a space is an
\emph{absolute Borel set} (or simply \emph{Borel}) if it is a Borel subspace of
some Polish space. See \cite[Lemma 2.2, Corollary 2.4 and Corollary 2.5]{hrusakzamoraaviles}
for a proof of the following theorem.
\begin{theorem}[Hru\v{s}\'ak, Zamora Avil\'es]\label{classify}
If $X$ is a zero-dimensional Borel $\CDH$ space, then $X\in\CC$.
\end{theorem}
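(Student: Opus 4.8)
I would prove the statement in two stages: first, that a zero-dimensional Borel $\CDH$ space is necessarily Polish, and then, that a zero-dimensional Polish $\CDH$ space must lie in $\CC$.

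\emph{Stage 1 (reduction to the Polish case).} Suppose $X$ is zero-dimensional, Borel, and $\CDH$, but not Polish, and embed $X$ as a dense subspace of a zero-dimensional compact metrizable space $Z$ (say $Z=\overline{X}$ in $2^\omega$). Since $X$ is Borel and not $G_\delta$ in $Z$, the Hurewicz dichotomy yields a Cantor set $C\subseteq Z$ such that $Q:=C\cap X$ is countable and dense in $C$; equivalently, $X$ has a closed subspace $Q$ homeomorphic to $\QQQ$. The plan is to contradict $\CDH$ by extending $Q$ to a countable dense subset $D_0$ of $X$ and comparing it with a carefully chosen countable dense subset $D_1$: any $h\in\HH(X)$ with $h[D_0]=D_1$ would carry the closed copy $Q$ of $\QQQ$ onto a closed copy of $\QQQ$ contained in $D_1$, so it suffices to use the Borel structure of $X$ to make that configuration impossible — in other words, to show that a Borel $\CDH$ space cannot contain a closed copy of $\QQQ$, hence is hereditarily Baire, hence (being Borel) Polish. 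I expect this to be the crux: for a non-Borel $\CDH$ space there is in general no such obstruction (closed copies of $\QQQ$ are perfectly compatible with $\CDH$), so the argument must essentially use that $X$ is Borel, and the delicate point is converting ``$X$ is Borel and contains a closed copy of $\QQQ$'' into an explicit pair of countable dense sets whose complements in $X$ fail to be homeomorphic.

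\emph{Stage 2 (the Polish case).} Let $X$ be zero-dimensional, Polish, and $\CDH$. If $X$ is countable, then any non-isolated point $x$ would make $X\setminus\{x\}$ a proper dense subspace, and no self-homeomorphism of $X$ can map the dense set $X$ onto $X\setminus\{x\}$; thus $X$ is discrete and $X\approx\kappa$ for some $\kappa\leq\omega$. Assume now $X$ is uncountable. First, $\CDH$ forces the scattered part of $X$ to be clopen and discrete — that is, to coincide with the set of isolated points, which is then clopen — so that $X\approx\kappa\oplus P$ with $P$ perfect: a nontrivial convergent sequence of isolated points whose limit is not clopenly separated from them yields, exactly as for $(\{0\}\cup\{1/n:n\geq1\})\oplus 2^\omega$, two countable dense sets that no homeomorphism can interchange. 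Since the set of isolated points is a topological invariant, every self-homeomorphism of $X$ restricts to one of $P$, so $P$ is again zero-dimensional, Polish, perfect, and $\CDH$. Next, $\CDH$ forces the (open) set $L$ of points of $P$ possessing a compact neighbourhood to be clopen, by a similar ``bad pair'' argument detecting a locally compact point accumulated by a nowhere-locally-compact piece. Then $L$ is locally compact, second countable, zero-dimensional, and perfect, so each of its points has a compact clopen neighbourhood homeomorphic to $2^\omega$ (Brouwer's characterization); covering $L$ by countably many of these and disjointifying yields $L\approx\lambda\times 2^\omega$ with $\lambda\leq\omega$. Finally $P\setminus L$ is perfect, zero-dimensional, Polish, and nowhere locally compact, so each of its points has a clopen neighbourhood homeomorphic to $\omega^\omega$ (the Alexandrov--Urysohn characterization), whence $P\setminus L\approx\mu\times\omega^\omega$ with $\mu\leq\omega$. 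Assembling the clopen pieces, $X\approx\kappa\oplus(\lambda\times 2^\omega)\oplus(\mu\times\omega^\omega)\in\CC$.
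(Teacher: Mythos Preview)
The paper does not itself prove this theorem; it merely refers the reader to \cite[Lemma~2.2, Corollary~2.4 and Corollary~2.5]{hrusakzamoraaviles}, so there is no in-paper argument to compare against. Your two-stage architecture --- reduce Borel to Polish, then classify zero-dimensional Polish $\CDH$ spaces --- does match the structure of the original, and your Stage~2 is essentially correct: once the set of isolated points and the locally compact kernel of the perfect part are shown to be clopen (the ``bad pair'' arguments you allude to do go through), Brouwer and Alexandrov--Urysohn finish the job exactly as you describe.

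Stage~1, however, has a genuine gap, and you flag it yourself. You correctly invoke the Hurewicz dichotomy to produce a closed copy $Q$ of $\QQQ$ when $X$ is Borel and not Polish, and you correctly reduce the problem to manufacturing two countable dense sets that no self-homeomorphism of $X$ can match. But your proposed mechanism --- choose $D_1$ so that it contains no closed-in-$X$ copy of $\QQQ$, making $h[Q]\subseteq D_1$ impossible --- is not justified: you give no argument that such a $D_1$ exists in an arbitrary Borel non-Polish space, and it is far from clear that one does. This is precisely the point at which the Borel hypothesis must do real work, and ``use the Borel structure of $X$ to make that configuration impossible'' is a statement of intent, not a proof. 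The actual argument of Hru\v{s}\'ak and Zamora Avil\'es supplies a concrete topological invariant of the complements $X\setminus D$ (tied to complete Baireness and hence, via Borelness and Hurewicz again, to being $\Gd$ in the completion) that distinguishes two well-chosen countable dense sets; without that idea, your Stage~1 remains only an outline.
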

\begin{proposition}
The class $\CC$ is closed under countable products.
\end{proposition}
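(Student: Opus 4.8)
The plan is to reduce both the finite and the countable case to the topological characterizations of $2^\omega$ (Brouwer: the unique non-empty compact metrizable zero-dimensional space with no isolated point) and of $\omega^\omega$ (Alexandrov--Urysohn: the unique non-empty Polish zero-dimensional nowhere locally compact space); see \cite{kechris}. Throughout we use that every member of $\CC$ is Polish, being a countable disjoint sum of Polish spaces, so that every countable product of members of $\CC$ is Polish and zero-dimensional.

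First I would dispose of finite products, for which it is enough to consider $X\times Y$ with $X,Y\in\CC$. Writing $X$ and $Y$ in normal form and using that products distribute over disjoint sums in the category of topological spaces, $X\times Y$ becomes the disjoint sum, indexed by the at most $\omega$ pairs of summands, of products of two summands; so it suffices to check that each such product is again a point, a copy of $2^\omega$, or a copy of $\omega^\omega$. Now $2^\omega\times 2^\omega\approx 2^\omega$ and $\omega^\omega\times\omega^\omega\approx\omega^\omega$ are immediate from $2^{\omega\sqcup\omega}\approx 2^\omega$ and $\omega^{\omega\sqcup\omega}\approx\omega^\omega$, while $2^\omega\times\omega^\omega\approx\omega^\omega$ follows from the Alexandrov--Urysohn characterization: $2^\omega\times\omega^\omega$ is plainly non-empty, Polish and zero-dimensional, and every basic clopen box $U\times V$ in it is a closed and non-compact set, since $V$ is a non-empty clopen, hence non-compact, subset of $\omega^\omega$, so no point has a compact neighborhood. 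Hence $\CC$ is closed under finite products.

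For the countable case, let $X_n\in\CC$ for $n\in\omega$. Empty factors make the product empty and one-point factors do not change its homeomorphism type, so after discarding these and invoking the finite case if only finitely many factors survive, I may assume every $X_n$ has at least two points. I then split on the set $P=\{n\in\omega:X_n\text{ is not compact}\}$. If $P$ is infinite, I claim $\prod_n X_n\approx\omega^\omega$: a basic clopen neighborhood of a point of $\prod_n X_n$ has the form $\prod_n V_n$ with each $V_n$ clopen in $X_n$ and $V_n=X_n$ off a finite set; such a set is closed, and since $V_n=X_n$ is non-compact for infinitely many $n\in P$ it is also non-compact, so no point of $\prod_n X_n$ has a compact neighborhood and Alexandrov--Urysohn applies. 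If $P$ is finite, then $\omega\setminus P$ is infinite and $\prod_n X_n\approx\big(\prod_{n\in P}X_n\big)\times\big(\prod_{n\notin P}X_n\big)$, where the first factor is in $\CC$ by the finite case and the second is an infinite product of non-empty compact metrizable zero-dimensional spaces each with at least two points, hence itself non-empty, compact, metrizable, zero-dimensional and perfect (a singleton basic neighborhood would force all but finitely many $X_n$ to be singletons), so it is homeomorphic to $2^\omega$ by Brouwer's theorem. Either way $\prod_n X_n$ is a product of a member of $\CC$ with one of $1$ and $2^\omega$, hence in $\CC$ by the finite case.

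The delicate point is that the ``coordinatewise'' decomposition of an infinite product of disjoint sums is in general not a disjoint sum, so the purely combinatorial argument of the finite case does not carry over; the role of the split on $P$ and of the characterization theorems is exactly to get around this.
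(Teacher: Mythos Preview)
Your argument is correct and follows the same overall strategy as the paper: dispose of finite products by distributing over the disjoint sums and checking the nine products of $1$, $2^\omega$, $\omega^\omega$, then handle the infinite case via the Brouwer and Alexandrov--Urysohn characterizations. The difference is in the case analysis for the infinite product. The paper, after reducing to non-empty factors with infinitely many of size greater than one, simply splits on whether the whole product $X$ is compact, asserting that if $X$ is not compact then it is ``hence nowhere compact'' and therefore homeomorphic to $\omega^\omega$. Your split on whether infinitely many \emph{factors} are non-compact is the more careful move, and it actually patches a small gap in the paper's version: an infinite product such as $(1\oplus\omega^\omega)\times 2^\omega\times 2^\omega\times\cdots$ satisfies the paper's hypotheses and is not compact, yet it contains the clopen compact set $\{\ast\}\times(2^\omega)^\omega$, so it is not nowhere compact (indeed it is $2^\omega\oplus\omega^\omega$, not $\omega^\omega$). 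By separating out the finitely many non-compact factors and recognizing the remaining infinite compact product as $2^\omega$, your argument reduces this mixed case to a finite product with a copy of $2^\omega$, which lands cleanly back in $\CC$.
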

\begin{proof}
It is easy to verify directly that $\CC$ is closed under finite products. So let
$X$ be the product of a countably infinite subcollection of $\CC$. Without loss of
generality, we can assume that all factors of $X$ are non-empty and infinitely
many of them have size bigger
than $1$. It follows that $X$ is a zero-dimensional Polish space with
no
isolated points. Therefore, $X$ is homeomorphic to $2^\omega$ (if $X$ is compact)
or to $\omega^\omega$ (if $X$ is not compact, hence nowhere compact).
\end{proof}
\begin{corollary}\label{positive}
Assume that $I$ is countable and $X_i$ is a zero-dimensional Borel
$\CDH$ space for every $i\in I$. Then $\prod_{i\in I}X_i$ is $\CDH$.
\end{corollary}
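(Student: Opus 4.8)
The plan is to read off the corollary from the two preceding results together with one extra ingredient: that every member of $\CC$ is $\CDH$. First, since $I$ is countable and each $X_i$ is a zero-dimensional Borel $\CDH$ space, Theorem~\ref{classify} gives $X_i\in\CC$ for every $i\in I$; then the Proposition gives $X:=\prod_{i\in I}X_i\in\CC$. So everything reduces to showing that an arbitrary member of $\CC$ is $\CDH$.

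To do this I would fix $X\approx\kappa\oplus(\lambda\times 2^\omega)\oplus(\mu\times\omega^\omega)$ with $\kappa,\lambda,\mu\leq\omega$ and appeal to the Anderson--Curtis--Van Mill theorem: $X$ is plainly Polish, being a countable disjoint sum of Polish spaces, so it is enough to produce a base witnessing that $X$ is $\SLH$. Take $\BB$ to be the collection of all nonempty clopen subsets of $X$ that are contained in a single summand. Such a set is a nonempty clopen subset of a one-point space, of $2^\omega$, or of $\omega^\omega$, and hence is homeomorphic to a one-point space, to $2^\omega$, or to $\omega^\omega$ respectively --- for the $\omega^\omega$ case one uses the Alexandrov--Urysohn characterization, noting that a nonempty clopen subset of $\omega^\omega$ is a nonempty zero-dimensional Polish space that is nowhere locally compact. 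This $\BB$ is a base for $X$, because every summand is clopen in $X$ and each of $2^\omega$, $\omega^\omega$, and the one-point space has a base of clopen sets. Finally, given $U\in\BB$ and $x,y\in U$, I would pick $g\in\HH(U)$ with $g(x)=y$ --- which exists because $2^\omega$ is a compact topological group, $\omega^\omega$ is homeomorphic to the topological group $\ZZZ^\omega$, and a one-point space is trivially homogeneous --- and then extend $g$ by $\id_{X\setminus U}$; this is a homeomorphism of $X$ precisely because $U$ is clopen and therefore has empty boundary. Thus $X$ is $\SLH$, hence $\CDH$.

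The part requiring actual argument is this last step, that $\CC$ consists of $\CDH$ spaces, and even it is routine once one recalls the homeomorphism types and the homogeneity of clopen subsets of $2^\omega$ and of $\omega^\omega$; everything else is bookkeeping on top of Theorem~\ref{classify} and the Proposition. If one prefers, the $\SLH$ detour can be avoided altogether: the Hru\v{s}\'ak--Zamora Avil\'es analysis actually characterizes the zero-dimensional Borel $\CDH$ spaces as precisely the members of $\CC$, so that $X\in\CC$ already delivers the conclusion.
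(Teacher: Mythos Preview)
Your proposal is correct and is exactly the argument the paper intends: the corollary is stated without proof, to be read off immediately from Theorem~\ref{classify} and the Proposition, together with the (unstated but evident) fact that every member of $\CC$ is $\CDH$. You supply an explicit verification of this last fact via the Anderson--Curtis--Van Mill theorem, which is appropriate and accurate; the paper simply takes it for granted.
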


Using a method of Baldwin and Beaudoin (see \S2), we will show that the
``Borel'' assumption in Corollary \ref{positive} cannot be dropped. The following is our main result.
The construction of the example is the content of \S3, and the
verification of its properties is the content of \S4. Recall that a space
$X$ is \emph{completely Baire} if every closed subspace of $X$ is Baire. By a
classical result of Hurewicz, a space is completely Baire if and only if it does
not contain any closed subspace that is homeomorphic to $\QQQ$ (see \cite[Corollary 1.9.13]{vanmilli}).

\begin{theorem}\label{main}
Assume $\MAsigma$. Then there exists a zero-dimensional $\CDH$ space $X$ such
that $X^2$ is not $\CDH$.\,\footnote{\,It follows from recent results of
Hern\'andez-Guti\'errez, Hru\v{s}\'ak and Van Mill that such a space also exists
in models obtained by adding at least $\omega_2$ Cohen reals to a model of
$\CH$. Just consider $X=Y\oplus 2^\omega$, where $Y$ is a meager $\CDH$ subspace
of
$2^\omega$ (such a space exists in $\ZFC$ by \cite[Theorem 4.1]{hernandezgutierrezhrusakvanmill}).
Notice that $Z=X^2\setminus (2^\omega\times 2^\omega)$ is meager and has size
$\cccc$, so it is not $\CDH$ by the proof of \cite[Theorem 4.4]{hernandezgutierrezhrusakvanmill}.
On
the other hand, $Z$ is preserved by every homeomorphism of $X^2$ because it is
the
union of all meager open subsets of $X^2$. Therefore $X^2$ is not
$\CDH$. However, it is clear that $X$ is neither homogeneous nor Baire.}
Furthermore, $X$ can be made homogeneous and
completely Baire as well.

\end{theorem}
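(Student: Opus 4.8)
The plan is to construct $X$ as a dense subspace of $2^\omega$ of size $\cccc$ by a transfinite recursion of length $\cccc$, following the method of Baldwin and Beaudoin, an abstract version of which is the content of \S2. Since $\MAsigma$ is equivalent to $\pppp=\cccc$, the cardinal $\cccc$ is regular; realizing $X$ as an increasing continuous union $X=\bigcup_{\alpha<\cccc}X_\alpha$ with $|X_\alpha|<\cccc$ then guarantees that every countable subset of $X$ lies in some $X_\alpha$ and that every countable dense subset of $X$ is already dense in $2^\omega$. Fix a countable dense set $Q=\{q_n:n\in\omega\}\subseteq 2^\omega$ with $Q\subseteq X_0$. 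At each successor stage we meet one requirement from the list below; the engine is that, given the countably many points and fewer than $\cccc$ side conditions committed so far, a suitable $\sigma$-centered poset of finite approximations together with $\MAsigma$ (applied to fewer than $\cccc$ dense sets) yields an autohomeomorphism of $2^\omega$ with a prescribed behaviour, after which $X$ is enlarged by closing it under that homeomorphism; unions are taken at limit stages.

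Three families of requirements are easy in isolation. For countable dense homogeneity together with homogeneity: for every countable dense $A\subseteq X$ equipped with an enumeration $(a_n)_{n\in\omega}$, require an $h_A\in\HH(2^\omega)$ with $h_A(a_n)=q_n$ for all $n$ and $h_A[X]=X$; the relevant poset is the classical countable poset matching two countable dense subsets of $2^\omega$ term by term, supplemented by the closure step (which also keeps $X$ closed under all previously built homeomorphisms). Then $h_B^{-1}\circ h_A$ maps $A$ onto $B$ for any two such enumerated countable dense subsets of $X$, so $X$ is $\CDH$; and choosing enumerated countable dense subsets with first terms $x$ and $y$ yields a homeomorphism of $X$ carrying $x$ to $y$, so $X$ is homogeneous. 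For complete Baireness: for every perfect $K\subseteq 2^\omega$ require $X\cap K$ to be uncountable, achieved by visiting each such $K$ cofinally often and committing a new point of $K$ to $X$ at each visit; by Hurewicz's theorem $X$ is then completely Baire. Finally, $X$ is zero-dimensional because $2^\omega$ is.

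The heart of the matter is the failure of $\CDH$ in $X^2$. I would obtain it from the observation that, for any space $Y$, the property ``the countable dense set $D\subseteq Y$ contains a subset that is closed in $Y$ and homeomorphic to $\QQQ$'' is invariant under $\HH(Y)$. Fix an autohomeomorphism $g$ of $2^\omega$ and let $P_0\subseteq(2^\omega)^2$ be its graph, a copy of $2^\omega$ closed in $(2^\omega)^2$, so that $X^2\cap P_0=\{(z,g(z)):z\in X\cap g^{-1}[X]\}$. We add two requirements. First, $X\cap g^{-1}[X]$ is to be a prescribed countable dense subset $D_0$ of $2^\omega$ with $D_0\cap Q=\emptyset$; then $C_0:=X^2\cap P_0=\{(z,g(z)):z\in D_0\}$ is a countable dense subset of $P_0$ disjoint from $Q\times Q$, and since it is of the form $X^2\cap(\text{closed set})$ it is closed in $X^2$, so $C_0$ is a copy of $\QQQ$ closed in $X^2$. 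Second, for every perfect $P\subseteq(2^\omega)^2$ with $X^2\cap P\subseteq Q\times Q$ we require $X^2\cap P$ not to be dense in $P$ --- achieved, when needed at the stage for $P$, by committing to $X$ the coordinates of a suitable point of $P\setminus(Q\times Q)$. The second requirement guarantees that no countable dense-in-itself subset of $Q\times Q$ is closed in $X^2$. Hence, with $B:=Q\times Q$ and $A:=C_0\cup E$ for any countable dense $E\subseteq X^2$, the invariant above separates the countable dense subsets $A$ and $B$ of $X^2$, so they lie in different $\HH(X^2)$-orbits and $X^2$ is not $\CDH$. (Because $X$ is completely Baire, the same invariant is constantly false on the countable dense subsets of $X$ and so is no obstruction to $X$ being $\CDH$; consistently with Theorem~\ref{classify}, the resulting $X$ is not Borel.)

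The main obstacle, and the real content of the construction and its verification, is that these families of requirements are not independent: closing $X$ under the homeomorphisms built for countable dense homogeneity enlarges $X$, which endangers both ``$X\cap K$ is uncountable for every perfect $K$'' and ``$X\cap g^{-1}[X]=D_0$''. The homeomorphisms we build must therefore respect these side conditions, which forces us either to enlarge the $\sigma$-centered posets so as to carry the extra information (while keeping them $\sigma$-centered, so that $\MAsigma$ still applies) or to choose $g$, $D_0$ and the enumerations carefully enough that the closures cannot violate the side conditions. Verifying that this can be done, and that all the requirements can be scheduled consistently by a single bookkeeping function of length $\cccc$, is the crux of the argument; it is precisely the abstract form of the engine of Baldwin and Beaudoin, isolated in \S2, that makes it go through.
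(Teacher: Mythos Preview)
Your approach differs substantially from the paper's, and as written it has a real gap.

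The paper does not try to maintain any global structural constraint on $X$. It fixes $Q^2$ and any countable dense $D\subseteq X^2$ with $\pi_0\upharpoonright D$ injective, and then \emph{kills every potential homeomorphism} carrying $Q^2$ to $D$: by Lavrentiev's theorem any such homeomorphism of $X^2$ extends to a homeomorphism $g_\alpha:T_\alpha\to T_\alpha$ of a $\Gd$ set $T_\alpha\supseteq X^2$, and these $g_\alpha$ can be enumerated in advance. At stage $\alpha$ one finds a single point $(x,y)\in T_\alpha$, commits $x,y$ to $X$ and $\pi_0(g_\alpha(x,y))$ to the complement $Y$; the injectivity of $\pi_0\upharpoonright g_\alpha[Q^2]$ is precisely what makes the avoidance sets $C^i_h=\{(x,y)\in T_\alpha:h(\pi_i(x,y))=\pi_0(g_\alpha(x,y))\}$ nowhere dense in $T_\alpha$, so such a point exists.

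Your plan instead requires $X\cap g^{-1}[X]=D_0$ for one \emph{fixed} autohomeomorphism $g$ and one \emph{fixed} countable $D_0$, maintained throughout a construction that repeatedly closes $X$ under a growing group $\HH\leq\HH(2^\omega)$. Here is the concrete problem. Suppose $X_\alpha\cap g^{-1}[X_\alpha]\subseteq D_0$ holds, $f$ has been built preserving $X_\alpha$ and $Y_\alpha$, and you now add a fresh point $z$ and close under $\HH_{\alpha+1}=\langle\HH_\alpha,f\rangle$. To keep the constraint you must in particular avoid $g(h(z))=h'(z)$ for all $h,h'\in\HH_{\alpha+1}$; equivalently $z$ must miss the fixed-point set of $(h')^{-1}gh$ for every such pair. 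These sets are closed, but they are nowhere dense only provided $g$ agrees with no element of $\HH_{\alpha+1}$ on a nonempty open set. The Baldwin--Beaudoin lemma of \S2 prescribes the values of $f$ on fewer than $\cccc$ points; it gives no control over arbitrary words $h_1f^{n_1}h_2f^{n_2}\cdots$ in $\langle\HH_\alpha,f\rangle$, and you offer no mechanism to prevent some such word from coinciding with $g$ on an open set. Without that, you cannot choose $z$, and already the density step fails. (Incidentally, enlarging $X$ cannot ``endanger $X\cap K$ uncountable''---that requirement only becomes easier as $X$ grows.) The paper's Lavrentiev-based killing sidesteps exactly this difficulty by handling one $g_\alpha$ at a time and never committing to a global equation of the form $X\cap g^{-1}[X]=D_0$.
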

\noindent We will not prove the second part of the theorem, since it can be
obtained by exactly the same methods used in the proof of \cite[Theorem 3.5]{baldwinbeaudoin}, and it would make our construction unnecessarily cumbersome. In
fact, those methods show that the space $X$ can be made a homogeneous Bernstein set. Recall that a subspace $X$ of
$2^\omega$ is a \emph{Bernstein set} if $X\cap K\neq\varnothing$ and $(2^\omega\setminus X)\cap
K\neq\varnothing$ for every perfect subset $K$ of $2^\omega$. Using the
characterization mentioned above, it is easy to see
that every Bernstein set is completely Baire.

We conclude this introduction with several open questions.
\begin{question}
Can the assumption of $\MAsigma$ in Theorem \ref{main} be dropped?
\end{question}

\begin{question}\label{general}
For which $\kappa$ such that $2\leq\kappa\leq\omega$ is there a
zero-dimensional space $X$ such that $X^n$ is $\CDH$ for every $n<\kappa$ while
$X^\kappa$ is not?\,\footnote{\,The case $\kappa=\omega$ was recently settled by
Hern\'andez-Guti\'errez, Hru\v{s}\'ak and Van Mill, who proved the existence of
such a space in $\ZFC$ (see \cite[Theorem 4.8]{hernandezgutierrezhrusakvanmill}).} Can $X$ be homogeneous and completely Baire? 
\end{question}
\noindent Notice that the space $X=2^\omega\oplus S^1$ is $\CDH$ while $X^2$ is
not. However, in the case $3\leq\kappa\leq\omega$, we would not know the answer
to Question \ref{general} even if the zero-dimensionality requirement were
dropped.

The \emph{type} of a countable dense subset $D$
of a space $X$ is $\{h[D]:h\in\HH(X)\}$. Clearly, a space is $\CDH$ if and only
if it has exactly one type of countable dense subsets. Also notice that $\cccc$
is the maximum possible number of types of countable dense subsets of a space.
In \cite{hrusakvanmill},
Hru\v{s}\'ak and Van Mill started an investigation of this natural notion. In
particular, \cite[Theorem 4.5]{hrusakvanmill} gives
a condition under which a space must have $\cccc$ types of countable dense
subsets (see also \cite[Theorem 14 and Theorem 16]{kunenmedinizdomskyy} for more specific
statements). However, we were unable to answer the following question, even in
the case $\kappa=\cccc$.
\begin{question}
For which cardinals $\kappa$ such that $2\leq\kappa\leq\cccc$ is there a
zero-dimensional $\CDH$ space $X$
such that $X^2$ has exactly $\kappa$ types of countable dense subsets?
\end{question}

By \cite[Corollary 2.7]{hrusakzamoraaviles}, Theorem \ref{classify} extends to all
projective spaces if one assumes the axiom of Projective Determinacy. Hence, the same is true
for Corollary \ref{positive}. Therefore, the following question seems natural.
\begin{question}
Is it consistent that there exists a zero-dimensional analytic $\CDH$ space $X$
such that $X^2$ is not $\CDH$? Coanalytic?
\end{question}

By considering $(\omega+1)\times 2^\omega\approx 2^\omega$, one sees that a
factor of a $\CDH$ product need not be $\CDH$. Actually, in \cite{vanmillr}, Van Mill constructed a rigid space $X$ such that $X^2\approx [0,1]^\omega$. Recall
that a space is \emph{rigid} if its only homeomorphism is the identity. In particular, $X$ is a continuum that is not $\CDH$ while $X^2$ is
$\CDH$.
\begin{question}
For which $\kappa$ such that $2\leq\kappa\leq\omega$ is there a space
$X$ such that $X^n$ is not $\CDH$ for every $n<\kappa$ while $X^\kappa$ is
$\CDH$?\,\footnote{\,Notice that $X=\omega+1$ and $X=[0,1]^d$ for every $d$ such that $1\leq d<\omega$ answer in the
affirmative the case $\kappa=\omega$.} Can $X$ be a continuum?
\end{question}

In \cite{lawrence}, Lawrence constructed a non-trivial rigid zero-dimensional
space $X$ such that $X^2$ is homogeneous. But we do not know whether $X^2$ can be
made $\CDH$.
\begin{question}
Is there a zero-dimensional space $X$ that is not $\CDH$ while
$X^2$ is $\CDH$? Can $X$ be rigid?
\end{question}

\section{Results of Baldwin and Beaudoin}
Given an infinite cardinal $\lambda$, a subset $D$ of a space $X$
is \emph{$\lambda$-dense} in $X$ if $|D\cap U|=\lambda$ for every non-empty open
subset $U$ of $X$. The following results are \cite[Lemma 3.1 and Lemma 3.2]{baldwinbeaudoin}. We present a simpler version of the proof of the first result.
Similar posets were recently used in the proofs of \cite[Lemma 22 and Lemma 25]{medinimilovich}.

\begin{theorem}[Baldwin, Beaudoin]\label{bbpreliminary}
Assume $\MAsigma$. Let $\kappa<\cccc$ be a cardinal. Suppose that $A_\alpha$ and
$B_\alpha$ are countable dense subsets of $2^\omega$ for each $\alpha < \kappa$.
Also assume that $A_\alpha\cap A_\beta=\varnothing$ and $B_\alpha\cap
B_\beta=\varnothing$ whenever $\alpha<\beta<\kappa$. Then there exists
$f\in\HH(2^\omega)$ such that $f[A_\alpha]=B_\alpha$ for every $\alpha <
\kappa$.
\end{theorem}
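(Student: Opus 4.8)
The plan is to produce $f$ as a sufficiently generic object for a $\sigma$-centered forcing notion $\PPP$ of finite approximations, and then invoke $\MAsigma$. Fix once and for all enumerations $A_\alpha=\{a^\alpha_k:k<\omega\}$ and $B_\alpha=\{b^\alpha_k:k<\omega\}$ for each $\alpha<\kappa$ (these sets are infinite, being dense), and for $s\in 2^{<\omega}$ order the infinite sets $A_\alpha\cap[s]$ and $B_\alpha\cap[s]$ by these enumerations. A condition $p\in\PPP$ will consist of a level $n_p<\omega$, a bijection $e_p\colon 2^{n_p}\to 2^{n_p}$ (the \emph{skeleton}, coding ``$f[[s]]=[e_p(s)]$ for $s\in 2^{n_p}$''), a finite set $F_p\subseteq\kappa$, and a number $j^p_{s,\alpha}<\omega$ for each $s\in 2^{n_p}$ and $\alpha\in F_p$, coding ``$f$ maps the first $j^p_{s,\alpha}$ points of $A_\alpha\cap[s]$ onto the first $j^p_{s,\alpha}$ points of $B_\alpha\cap[e_p(s)]$, in the order of the fixed enumerations''. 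One puts $q\le p$ when $n_q\ge n_p$, the skeleton $e_q$ refines $e_p$ (that is, $e_q(t)\upharpoonright n_p=e_p(t\upharpoonright n_p)$ for all $t\in 2^{n_q}$), $F_q\supseteq F_p$, and the commitments of $q$ entail those of $p$ — which, spelled out, forces the refinement $e_q$ to redistribute the committed points of $p$ consistently on the domain and the range side, and forces the counts $j^q_{t,\alpha}$ to dominate the ones thereby induced. The first technical task is to pin this order down so that it is transitive, so that every condition is realized by an actual homeomorphism of $2^\omega$ (which holds: a homeomorphism may send any finite tuple of points of $2^\omega$ to any other finite tuple, so one builds it clopen piece by clopen piece, permuting the finitely many committed points in each — here one uses that the $A_\alpha$, and the $B_\alpha$, are pairwise disjoint), and so that the density claims below go through.

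Granting that, $\PPP$ is $\sigma$-centered: it is the union over the \emph{countably many} finite skeletons $(n,e)$ of the sets $\PPP_{n,e}=\{p:n_p=n\text{ and }e_p=e\}$, and each $\PPP_{n,e}$ is centered, since finitely many conditions $p_1,\dots,p_m\in\PPP_{n,e}$ have the common lower bound with level $n$, skeleton $e$, index set $\bigcup_i F_{p_i}$, and count $\max_i j^{p_i}_{s,\alpha}$ at each $(s,\alpha)$. The dense sets to meet are $E_n=\{p:n_p\ge n\}$ for $n<\omega$; $D_{\alpha,k}=\{p:\alpha\in F_p\text{ and }a^\alpha_k\text{ is among the first }j^p_{s,\alpha}\text{ points of }A_\alpha\cap[s],\text{ where }s=a^\alpha_k\upharpoonright n_p\}$ for $\alpha<\kappa$, $k<\omega$; and the symmetric sets $D'_{\alpha,k}$ demanding instead that $b^\alpha_k$ be committed. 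Density is where the work is: given $p$, first enlarge $F_p$ to include $\alpha$ (harmlessly, since a new index may be added with all its counts equal to $0$); then refine the skeleton to a level large enough to separate all points currently committed by $p$ into distinct clopen pieces, choosing the refinement so that on each side a committed piece maps to the committed piece of the corresponding point — possible precisely because the committed sets of $p$ already have matching cardinalities piece by piece; finally bump one further count by $1$, using the density of $A_\alpha$ (respectively $B_\alpha$) to locate an as-yet-uncommitted point of $B_\alpha$ (respectively $A_\alpha$) inside the relevant clopen set.

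Since $\kappa\cdot\omega=\kappa<\cccc$, $\MAsigma$ yields a filter $G$ on $\PPP$ meeting all of $\{E_n:n<\omega\}\cup\{D_{\alpha,k},D'_{\alpha,k}:\alpha<\kappa,\ k<\omega\}$. Then $f:=\bigcup_{p\in G}e_p$ is a homeomorphism of $2^\omega$ (the skeletons are directed by refinement and the levels are unbounded), and for each $\alpha<\kappa$ one gets $f[A_\alpha]=B_\alpha$: every $a^\alpha_k$ is committed by some condition of $G$, so $f(a^\alpha_k)\in B_\alpha$, and every $b^\alpha_k$ is hit by the symmetric requirement; passing to sufficiently fine conditions of $G$, which isolate the committed points inside single clopen pieces, confirms that $f$ does respect the commitments it is handed. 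This $f$ is as required.

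The step I expect to be the real obstacle is the $\sigma$-centeredness — or rather, arranging the poset so that it genuinely holds. The naive poset, whose conditions carry an \emph{arbitrary} finite partial bijection between points of $\bigcup_\alpha A_\alpha$ and points of $\bigcup_\alpha B_\alpha$, fails to be $\sigma$-centered in the needed way once $\kappa$ is uncountable: two conditions with the same skeleton can still be incompatible (disagreeing about the image of a point, or sending two points to one point), and there are $\kappa$-many, not countably many, finite partial bijections to split the poset along. Committing instead only to the \emph{initial segments} ``the first $j_{s,\alpha}$ points of $A_\alpha\cap[s]$ go, in order, to the first $j_{s,\alpha}$ points of $B_\alpha\cap[e(s)]$'' makes the point data canonical, so that conditions sharing a skeleton amalgamate by taking maxima of counts; the price is that one must check this rigid bookkeeping still permits the density arguments, which it does because every condition is realizable and the fixed enumerations restrict coherently to subpieces.
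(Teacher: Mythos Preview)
Your argument works, modulo a small imprecision in the density step: after separating the already-committed points, the target $a^\alpha_k$ need not be the \emph{next} point of $A_\alpha$ in its clopen piece, so ``bump one count by $1$'' does not automatically capture it. Either bump the count high enough to reach its rank, or in the refinement also separate $a^\alpha_k$ from every $a^\alpha_j$ with $j<k$, after which it becomes first in its piece and a single bump suffices. (You should also note that not every intermediate level $n'$ admits a refinement of a given $p$; only levels large enough to separate the committed points do. This is harmless for the density of the $E_n$, but it means the order is less flexible than it first appears.)

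The paper takes a different and shorter route, and your final paragraph misidentifies the obstacle. The paper uses precisely what you call the ``naive'' poset: a condition is a pair $(g,\pi)$ where $\pi$ is a skeleton as in your setup and $g$ is an \emph{arbitrary} finite partial injection carrying each $A_\alpha$ into $B_\alpha$, compatible with $\pi$. This poset \emph{is} $\sigma$-centered. The device is an independent family of functions: enumerate the countably many finite partial bijections $A_\alpha\to B_\alpha$ as $\{d^\alpha_j:j<\omega\}$, fix an independent family $\{f_\alpha:\omega\to\omega\mid\alpha<\kappa\}$, and set $x_e=\bigcup_{\alpha<\kappa} d^\alpha_{f_\alpha(e)}$ for $e<\omega$. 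Because the $A_\alpha$ (and the $B_\alpha$) are pairwise disjoint, each $x_e$ is an injection, and by independence every $g_p$ is contained in some $x_e$; two conditions with $g$-parts below the same $x_e$ and with the same skeleton $\pi$ are then compatible, so countably many centered pieces cover $\PPP$. Thus the partition is not along the $\kappa$-many finite bijections themselves but along countably many universal injections containing them. Your rigid initial-segment bookkeeping achieves the same goal---making conditions with a common skeleton automatically compatible---but trades a one-line classical combinatorial fact for a more intricate order relation and a more delicate refinement verification; the paper's approach buys a completely transparent density argument in exchange.
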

\begin{proof}
Consider the poset $\PPP$ consisting of all pairs of the form
$p=(g,\pi)=(g_p,\pi_p)$ such that, for some $n=n_p\in\omega$, the following
requirements are satisfied. Let $A=\bigcup_{\alpha\in\kappa}A_\alpha$ and
$B=\bigcup_{\alpha\in\kappa}B_\alpha$.
\begin{itemize}
\item $g=g_{\alpha_1}\cup\cdots\cup g_{\alpha_m}$, where
$\alpha_1<\cdots<\alpha_m<\kappa$ and each $g_{\alpha_i}$ is a finite bijection
from $A_{\alpha_i}$ to $B_{\alpha_i}$.
\item $\pi$ is a permutation of ${}^{n}2$.
\item $\pi(a\upharpoonright n)=g(a)\upharpoonright n$ for every $a\in\dom(g)$.
\end{itemize}

\noindent Order $\PPP$ by declaring $q\leq p$ if the following conditions are satisfied.
\begin{itemize}
\item $g_q\supseteq g_p$.
\item $\pi_q(t)\upharpoonright n_p=\pi_p(t\upharpoonright n_p)$ for all $t\in
{}^{n_q}2$.
\end{itemize}

For each $\ell\in\omega$, define
$$
D_\ell=\{p\in\PPP:n_p\geq\ell\}.
$$
Let $p=(g,\pi)\in\PPP$ with $n_p=n$, and let $\ell\in\omega$. Choose $n'\geq\ell,n$
big enough so that all $a\upharpoonright n'$ are distinct for $a\in\dom(g)$ and
all $b\upharpoonright n'$ are distinct for $b\in\ran(g)$. Now it is easy to
obtain a permutation $\pi'$ of ${}^{n'}2$ such that $q=(g,\pi')\in\PPP$ and
$q\leq p$. So each $D_\ell$ is dense in $\PPP$.

For each $a\in A$, define
$$
D^{\dom}_a=\{p\in\PPP:a\in\dom(g_p)\}.
$$
Given $p\in\PPP$ and $a\in A_\alpha\setminus \dom(g_p)$, one can simply choose
$b\in B_\alpha\setminus \ran(g_p)$ such that $b\upharpoonright
n_p=\pi_p(a\upharpoonright n_p)$. This choice will make sure that
$q=(g_p\cup\{(a,b)\},\pi_p)\in\PPP$. Furthermore, it is clear that $q\leq p$. So
each $D^{\dom}_a$ is dense in $\PPP$. 

For each $b\in B$, define
$$
D^{\ran}_b=\{p\in\PPP:b\in\ran(g_p)\}.
$$
As above, one can easily show that each $D^{\ran}_b$ is dense in $\PPP$.

It remains to show that $\PPP$ is $\sigma\text{-}\mathrm{centered}$. We will
proceed as in \cite[Exercise III.2.13]{kunen}. It will be enough to construct
$x_e:A\longrightarrow B$ for $e\in\omega$ such that $g\subseteq x_e$ for some
$e$ whenever $g=g_p$ for some $p\in\PPP$. Let $\{f_\alpha:\alpha<\kappa\}$ be an
independent family of functions (see \cite[Exercise III.2.12]{kunen}). In
particular, each $f_\alpha:\omega\longrightarrow\omega$ and, given any
$j_1,\ldots,j_m\in\omega$ and $\alpha_1<\cdots<\alpha_m<\kappa$, there exists
$e\in\omega$ such that $f_{\alpha_1}(e)=j_1,\ldots, f_{\alpha_m}(e)=j_m$.
Enumerate as $\{d^\alpha_j:j\in\omega\}$ all finite bijections from $A_\alpha$
to $B_\alpha$. It is easy to check that defining
$$
x_e=\bigcup_{\alpha\in\kappa}d^\alpha_{f_\alpha(e)}
$$
for every $e\in\omega$ yields the desired functions. (Notice that $\PPP$ would
be $\sigma\text{-}\mathrm{centered}$ even if $\kappa=\cccc$. However, in that
case, we would have too many dense sets.)

Since $|A|<\cccc$ and $|B|<\cccc$, the collection of dense sets
$$
\DD=\{D_\ell:\ell\in\omega\}\cup\{D^\dom_a:a\in A\}\cup\{D^\ran_b:b\in B\}
$$
has also size less than $\cccc$. Therefore, by $\MAsigma$, there exists a
$\DD$-generic filter $G\subseteq\PPP$. To define $f(x)(i)$, for a given $x\in
2^\omega$ and $i\in\omega$, choose any $p\in G$ such that $i\in n_p$ and set
$f(x)(i)=\pi_p(x\upharpoonright n_p)(i)$.
\end{proof}
\begin{corollary}[Baldwin, Beaudoin]\label{bb}
Assume $\MAsigma$. Let $\kappa<\cccc$ be a cardinal. Suppose that $A_\alpha$ and
$B_\alpha$ are $\lambda_\alpha$-dense subsets of $2^\omega$ for each $\alpha <
\kappa$, where each $\lambda_\alpha<\cccc$ is an infinite cardinal. Also assume
that $A_\alpha\cap A_\beta=\varnothing$ and $B_\alpha\cap B_\beta=\varnothing$
whenever $\alpha<\beta<\kappa$. Then there exists $f\in\HH(2^\omega)$ such that
$f[A_\alpha]=B_\alpha$ for every $\alpha < \kappa$.
\end{corollary}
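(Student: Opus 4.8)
The plan is to derive Corollary~\ref{bb} from Theorem~\ref{bbpreliminary} by chopping each of the given $\lambda_\alpha$-dense sets into countably infinite dense pieces and matching the pieces up. So the first step is to record the following elementary fact: \emph{if $\lambda$ is an infinite cardinal and $D$ is a $\lambda$-dense subset of $2^\omega$, then $D=\bigsqcup_{\xi<\lambda}D_\xi$ for some countable dense subsets $D_\xi$ of $2^\omega$.} Then I would apply this to every $A_\alpha$ and every $B_\alpha$ and feed the resulting families into Theorem~\ref{bbpreliminary}.

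To prove the fact, fix a countable base $\{U_n:n\in\omega\}$ of $2^\omega$ consisting of non-empty sets, note that $|D|=\lambda$, enumerate $D=\{d_\eta:\eta<\lambda\}$, and fix a bijection $\langle\cdot,\cdot\rangle\colon\lambda\times\omega\to\lambda$. I would build a colouring $c\colon D\to\lambda$ by recursion on $\eta<\lambda$ as follows: at stage $\eta=\langle\xi,n\rangle$, choose some $d\in D\cap U_n$ that has not yet received a colour, set $c(d)=\xi$, and if $d_\eta$ still has no colour set $c(d_\eta)=\xi$ as well. Such a $d$ exists because $|D\cap U_n|=\lambda$ while fewer than $\lambda$ points have been coloured so far (at each of the $|\eta|<\lambda$ previous stages at most two points were coloured). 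Setting $D_\xi=c^{-1}[\{\xi\}]$, one checks: every $d_\eta$ is coloured by stage $\eta$, so the $D_\xi$ cover $D$; a point receives colour $\xi$ only at one of the $\omega$ many stages $\langle\xi,n\rangle$ ($n\in\omega$), so each $D_\xi$ is countable; and $D_\xi\cap U_n\neq\varnothing$ for every $n$, so each $D_\xi$ is dense.

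With the fact in hand, write $A_\alpha=\bigsqcup_{\xi<\lambda_\alpha}A_\alpha^\xi$ and $B_\alpha=\bigsqcup_{\xi<\lambda_\alpha}B_\alpha^\xi$ with all pieces countable dense subsets of $2^\omega$, and set $J=\{(\alpha,\xi):\alpha<\kappa,\ \xi<\lambda_\alpha\}$. The sets $\{A_\alpha^\xi:(\alpha,\xi)\in J\}$ are pairwise disjoint (for fixed $\alpha$ by construction, and for $\alpha\neq\beta$ because $A_\alpha\cap A_\beta=\varnothing$), and likewise for the $B_\alpha^\xi$; moreover $|J|=\sum_{\alpha<\kappa}\lambda_\alpha<\cccc$, since $\kappa<\cccc$, each $\lambda_\alpha<\cccc$, and $\cccc$ is regular under $\MAsigma$. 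Re-indexing $J$ by an ordinal $<\cccc$, Theorem~\ref{bbpreliminary} yields $f\in\HH(2^\omega)$ with $f[A_\alpha^\xi]=B_\alpha^\xi$ for every $(\alpha,\xi)\in J$, and then $f[A_\alpha]=\bigcup_{\xi<\lambda_\alpha}f[A_\alpha^\xi]=\bigcup_{\xi<\lambda_\alpha}B_\alpha^\xi=B_\alpha$ for every $\alpha<\kappa$, as required.

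This argument has no genuine obstacle — it is a soft reduction, so in a sense it is already the whole proof. The only two points that need a moment's care are the cardinal bookkeeping in the partition fact (which works because we colour boundedly many points at each of $\lambda$ stages) and the bound $|J|<\cccc$, which is precisely the place where one uses the hypothesis $\MAsigma$ (via regularity of $\cccc$) rather than something weaker.
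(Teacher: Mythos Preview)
Your argument is correct and follows exactly the paper's approach: partition each $A_\alpha$ and $B_\alpha$ into $\lambda_\alpha$ countable dense pieces, use that $\MAsigma$ makes $\cccc$ regular to bound the total number of pieces below $\cccc$, and apply Theorem~\ref{bbpreliminary}. The only difference is that you spell out the partition fact in detail, whereas the paper simply asserts it.
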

\begin{proof}
Notice that each $A_\alpha$, being a $\lambda_\alpha$-dense subset of
$2^\omega$, can be partitioned into $\lambda_\alpha$ countable dense subsets of
$2^\omega$. The same holds for each $B_\alpha$. Since $\MAsigma$ implies that
$\cccc$ is regular (see for example \cite[Theorem III.3.61 and Lemma III.1.26]{kunen}), we can apply Theorem \ref{bbpreliminary}.
\end{proof}

The results in this section were originally employed by Baldwin and Beaudoin to
construct a homogeneous $\CDH$ Bernstein set under $\MAsigma$ (see \cite[Theorem 3.5]{baldwinbeaudoin}). We remark that their proof contains a small
inaccuracy. Using their notation, given $\lambda$-dense $A,B\subseteq X_\alpha$,
it is not possible to use Corollary \ref{bb} to find $g\in\HH(2^\omega)$ such
that $g[A]=B$, $g[B]=A$, $g[X_\gamma\setminus(A\cup B)]=X_\gamma\setminus(A\cup
B)$ and $g[Y_\gamma]=Y_\gamma$, since $A$ and $B$ might not be disjoint.
However, this is easily fixed by requiring instead that $g[A]=B$,
$g[X_\gamma\setminus A]=X_\gamma\setminus B$ and $g[Y_\gamma]=Y_\gamma$, as we
do in the next section.

\section{The construction}

Let $Q$ and $R$ be any two disjoint countable dense subsets of $2^\omega$. Given
$i\in 2$, denote by $\pi_i:2^\omega\times 2^\omega\longrightarrow
2^\omega$ the natural projection on the $i$-th coordinate. Given $S\subseteq
2^\omega$ and a subgroup $\HH$ of $\HH(2^\omega)$, let
$$
\HH[S]=\{h(z):z\in S, h\in\HH\}
$$
denote the closure of $S$ under the action of $\HH$.

Enumerate as $\{(A_\alpha, B_\alpha):\alpha<\cccc\}$ all pairs of countable
dense subsets of $2^\omega$, making sure that each pair is listed cofinally
often.
Enumerate as $\{g_\alpha:\alpha<\cccc\}$ all homeomorphisms satisfying the
following conditions.

\begin{itemize}

\item $g_\alpha:T_\alpha\longrightarrow T_\alpha$, where $T_\alpha$ is a
$\Gd$ subset of $2^\omega\times 2^\omega$.

\item $Q^2\subseteq T_\alpha$.

\item $\pi_0\upharpoonright (g_\alpha[Q^2])$ is injective.

\end{itemize}

\noindent Notice that each $T_\alpha$ is dense in $2^\omega\times 2^\omega$.
In particular, if $M$ is meager in $2^\omega\times 2^\omega$ then $M\cap
T_\alpha$ is meager in $T_\alpha$. Also notice that each $T_\alpha$ is a Polish
space.

By transfinite recursion, we will construct two increasing sequences $\langle
X_\alpha:\alpha<\cccc\rangle$ and $\langle
Y_\alpha:\alpha<\cccc\rangle$ of subsets of $2^\omega$, and an increasing
sequence $\langle \HH_\alpha:\alpha<\cccc\rangle$ of subgroups of $\HH(2^\omega)$.

By induction, we will make sure that the following requirements are satisfied
for every $\alpha<\cccc$.

\begin{enumerate}

\item\label{small} $|X_\alpha|,|Y_\alpha|,|\HH_\alpha|\leq
\maxi\{|\alpha|,\omega\}$.

\item\label{disjoint} $X_\alpha\cap Y_\alpha=\varnothing$.

\item\label{preservation} $h[X_\alpha]=X_\alpha$ and $h[Y_\alpha]=Y_\alpha$ for
all $h\in\HH_\alpha$.

\item\label{density} $X_{\alpha+1}\setminus X_\alpha$ and $Y_{\alpha+1}\setminus
Y_\alpha$ are
$\maxi\{|\alpha|,\omega\}$-dense in $2^\omega$.

\item\label{cdh} If $A_\alpha\cup B_\alpha\subseteq X_\alpha$ and
$X_\alpha\setminus (A_\alpha\cup B_\alpha)$ is $\maxi\{|\alpha|,\omega\}$-dense
in $2^\omega$ then there exists $f\in\HH_{\alpha+1}$ such that
$f[A_\alpha]=B_\alpha$.

\item\label{kill} There exists $(x,y)\in X_{\alpha+1}^2\cap T_\alpha$ such that
$\pi_0(g_\alpha(x,y))\in Y_{\alpha+1}$.

\end{enumerate}

Start the construction by letting $X_0=Q$, $Y_0=R$ and
$\HH_0=\{\id_{2^\omega}\}$.

Take unions at limit stages. At a successor stage $\alpha+1$, assume that
$X_\beta$, $Y_\beta$ and $\HH_\beta$ are given for every $\beta\leq\alpha$. We
will start by defining $\HH_{\alpha+1}$,
making sure that condition $(\ref{cdh})$ is satisfied. Let
$\lambda=\maxi\{|\alpha|,\omega\}$. If $(A_\alpha\cup
B_\alpha)\nsubseteq X_\alpha$ or $X_\alpha\setminus (A_\alpha\cup B_\alpha)$ is
not $\lambda$-dense in $2^\omega$, simply let $\HH_{\alpha+1}=\HH_\alpha$. Now
assume
that $(A_\alpha\cup B_\alpha)\subseteq X_\alpha$ and $X_\alpha\setminus
(A_\alpha\cup B_\alpha)$ is $\lambda$-dense in $2^\omega$. By applying Corollary
\ref{bb} with $\kappa=3$, $\lambda_0=\omega$ and
$\lambda_1=\lambda_2=\lambda$, one obtains $f\in\HH(2^\omega)$ such that
$f[A_\alpha]=B_\alpha$, $f[X_\alpha\setminus A_\alpha]=X_\alpha\setminus
B_\alpha$ and $f[Y_\alpha]=Y_\alpha$. Let
$\HH_{\alpha+1}=\langle\HH_\alpha\cup\{f\}\rangle$. For the rest of the
proof, let $\HH=\HH_{\alpha+1}$.

Next, we will make sure that condition $(\ref{kill})$ is satisfied. Define
$$
C_h^i=\{(x,y)\in T_\alpha:h(\pi_i(x,y))=\pi_0(g_\alpha(x,y))\}
$$
for $h\in\HH$ and $i\in 2$. Clearly, each $C_h^i$ is closed.
We claim that they are also nowhere dense. We will prove this only for $C_h^0$,
since a similar argument works for $C_h^1$. In order to get a contradiction,
assume that $U$ and $V$ are non-empty open subsets of $2^\omega$ such that
$h(x)=\pi_0(g_\alpha(x,y))$
whenever $(x,y)\in (U\times V)\cap T_\alpha$. Fix $q,r,r'\in Q$ such that $r\neq
r'$ and $(q,r),(q,r')\in U\times V$.
Then $\pi_0(g_\alpha(q,r))=h(q)=\pi_0(g_\alpha(q,r'))$, contradicting the
assumption that $\pi_0\upharpoonright (g_\alpha[Q^2])$ is injective.

Since $\MAsigma$ obviously implies $\MAc$, which is equivalent to
$\mathsf{cov}(\textrm{meager})=\cccc$
(see \cite[Theorem 7.13]{blass}), and $|\HH|,|X_\alpha|,|Y_\alpha|<\cccc$ by
condition $(\ref{small})$, there exists $(x,y)\in T_\alpha$ such that
$$
(x,y)\notin g_\alpha^{-1}[\pi_0^{-1}[X_\alpha]\cap T_\alpha]\cup\bigcup_{i\in
2}\pi_i^{-1}[Y_\alpha]\cup\bigcup_{h\in\HH,i\in 2}C_h^i.
$$
Notice that $\HH[X_\alpha]\cap\HH[Y_\alpha]=X_\alpha\cap Y_\alpha=\varnothing$
by conditions $(\ref{disjoint})$, $(\ref{preservation})$, and by our choice of $f$.
The set $g_\alpha^{-1}[\pi_0^{-1}[X_\alpha]\cap T_\alpha]$ guarantees that
$\HH[X_\alpha]\cap \HH[\{\pi_0(g_\alpha(x,y))\}]=\varnothing$.
The sets $\pi_i^{-1}[Y_\alpha]$ guarantee that $\HH[\{x,y\}]\cap
\HH[Y_\alpha]=\varnothing$.
The sets $C_h^i$ guarantee that $\HH[\{x,y\}]\cap
\HH[\{\pi_0(g_\alpha(x,y))\}]=\varnothing$.
Combining the above observations, one sees that
$$
\HH[X_\alpha\cup\{x,y\}]\cap\HH[Y_\alpha\cup \{\pi_0(g_\alpha(x,y))\}]=\varnothing.
$$

It follows that it is possible to construct $X_{\alpha+1}\supseteq
\HH[X_\alpha\cup\{x,y\}]$ and $Y_{\alpha+1}\supseteq \HH[Y_\alpha\cup
\{\pi_0(g_\alpha(x,y))\}]$ that satisfy the
requirement $(\ref{density})$, while still mantaining $(\ref{small})$,
$(\ref{disjoint})$ and $(\ref{preservation})$. This can be done in
$\lambda$ stages, adding one point to each from every non-empty clopen subset of
$2^\omega$ and closing under the action of $\HH$ at each stage.

In the end, set $X=\bigcup_{\alpha\in\cccc}X_\alpha$.

\section{The verification}

We will start by showing that $X$ is $\CDH$. So fix a pair $(A,B)$ of countable
dense subsets of $X$. Since $\cf(\cccc)>\omega$, there exists $\alpha<\cccc$
such that $A\cup B\subseteq X_\alpha$. Now fix $\beta\geq\alpha+1$ such that
$(A,B)=(A_\beta,B_\beta)$. Notice that $X_\beta\setminus (A_\beta\cup B_\beta)$
is $\maxi\{|\beta|,\omega\}$-dense in $2^\omega$ by condition $(\ref{density})$.
Therefore, by condition $(\ref{cdh})$, there exists $f\in\HH_{\beta+1}$ such
that $f[A_\beta]=B_\beta$. Condition $(\ref{preservation})$ guarantees that
$f[X]=X$, so $f\upharpoonright X$ is the desired homeomorphism.

In order to show that $X^2$ is not $\CDH$, we will employ the following
classical result, which is a well-known tool for ``killing'' homeomorphisms (see
\cite{vanmills} for several
interesting applications). For a proof of Theorem \ref{lavrentiev},
see \cite[Theorem 3.9 and Exercise 3.10]{kechris}.
\begin{theorem}[Lavrentiev]\label{lavrentiev}
Let $Z$ be a Polish space and $S\subseteq Z$. Every homeomorphism
$f:S\longrightarrow S$ extends
to a homeomorphism $g:T\longrightarrow T$, where $T\supseteq S$ is a
$\Gd$ subset of $Z$.
\end{theorem}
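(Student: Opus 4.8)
The plan is to deduce the theorem from a single extension lemma applied to both $f$ and $f^{-1}$, and then to pass to a $g$-invariant $\Gd$ set so that the domain and range coincide. Fix a complete metric $d$ on $Z$. The heart of the matter is the following assertion: if $A\subseteq Z$ and $\varphi:A\longrightarrow Z$ is continuous, then $\varphi$ extends to a continuous map $\overline{\varphi}:G\longrightarrow Z$ on a $\Gd$ set $G$ with $A\subseteq G\subseteq\overline{A}$. To prove this, for $x\in\overline{A}$ set
\[
\mathrm{osc}_\varphi(x)=\inf\bigl\{\mathrm{diam}\,\varphi[U\cap A]:U\text{ is an open neighborhood of }x\bigr\},
\]
and let $G=\{x\in\overline{A}:\mathrm{osc}_\varphi(x)=0\}$. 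Since $\{x:\mathrm{osc}_\varphi(x)<1/n\}$ is open in $\overline{A}$ for each $n$, the set $G$ is $\Gd$ in $\overline{A}$, hence in $Z$, and continuity of $\varphi$ gives $A\subseteq G$. For $x\in G$ and any sequence $a_k\to x$ in $A$, the vanishing oscillation forces $(\varphi(a_k))$ to be $d$-Cauchy, so by completeness it converges to a limit independent of the sequence, which I define to be $\overline{\varphi}(x)$; a routine $\varepsilon$-argument shows that $\overline{\varphi}$ is continuous.

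Applying this to $f$ and $f^{-1}$ yields continuous extensions $\overline{f}:A_1\longrightarrow Z$ and $\overline{g}:B_1\longrightarrow Z$, where $A_1,B_1$ are $\Gd$ sets with $S\subseteq A_1,B_1\subseteq\overline{S}$. Next I would set
\[
T_0=\{x\in A_1:\overline{f}(x)\in B_1\text{ and }\overline{g}(\overline{f}(x))=x\},
\]
which is closed in the $\Gd$ set $\overline{f}^{-1}[B_1]$ (it is the equalizer of two continuous maps into the Hausdorff space $Z$), hence $\Gd$ in $Z$; symmetrically $T_0'=\{y\in B_1:\overline{g}(y)\in A_1\text{ and }\overline{f}(\overline{g}(y))=y\}$ is $\Gd$. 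Since $\overline{f}=f$ and $\overline{g}=f^{-1}$ on $S$, both sets contain $S$, and a direct check shows that $\overline{f}\upharpoonright T_0$ maps $T_0$ bijectively onto $T_0'$ with inverse $\overline{g}\upharpoonright T_0'$; the defining conditions make these two continuous restrictions mutually inverse, so $\overline{f}\upharpoonright T_0$ is a homeomorphism of $T_0$ onto $T_0'$ extending $f$.

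It remains to force the domain and range to agree. Write $g=\overline{f}\upharpoonright T_0$, so that $g:T_0\longrightarrow T_0'$ is a homeomorphism with $S\subseteq T_0\cap T_0'$ and $g^{-1}=\overline{g}\upharpoonright T_0'$, and define
\[
T=\{x\in Z:g^n(x)\text{ is defined and lies in }T_0\cap T_0'\text{ for every }n\in\ZZZ\},
\]
where $g^n$ is the $n$-fold partial iterate of $g$, with negative powers read off $g^{-1}$. Each condition ``$g^n(x)\in T_0\cap T_0'$'' cuts out a $\Gd$ set, because $g^n$ is continuous on a $\Gd$ domain and $T_0\cap T_0'$ is $\Gd$; intersecting over the countably many $n\in\ZZZ$ keeps $T$ a $\Gd$ set. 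For $x\in S$ every iterate equals $f^n(x)\in S\subseteq T_0\cap T_0'$, so $S\subseteq T$; and since the bi-infinite orbit of any point of $T$ stays inside $T$, we get $g[T]=T$, whence $g\upharpoonright T:T\longrightarrow T$ is the required self-homeomorphism extending $f$.

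The main obstacle is the extension lemma itself — producing the $\Gd$ domain and the continuous extension simultaneously through the oscillation function — together with the bookkeeping needed to see that $T_0$, $T_0'$, and finally $T$ are genuinely $\Gd$ rather than merely analytic (which is why I describe them as equalizers and iterated continuous preimages rather than as projections of graphs). The orbit-intersection in the last paragraph is the step that upgrades the generic conclusion, a homeomorphism between two possibly distinct $\Gd$ sets, to the self-map form $g:T\longrightarrow T$ demanded by the statement.
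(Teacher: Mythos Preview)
Your argument is correct, and it is essentially the standard proof: the paper does not give its own argument but refers the reader to \cite[Theorem~3.9 and Exercise~3.10]{kechris}, and what you have written is exactly that route---Kuratowski's oscillation extension (Kechris~3.8), the equalizer trick to obtain a homeomorphism between two $\Gd$ sets (Kechris~3.9), and then the orbit-intersection to make the domain and range coincide (Kechris~3.10). The bookkeeping you flag (that $T_0$, $T_0'$ and $T$ are honest $\Gd$ sets, via closed equalizers and iterated continuous preimages) is handled correctly.
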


Let $D$ be a countable dense subset of $X^2$ such that $\pi_0\upharpoonright D$
is injective. Such a subset is easy to construct using the fact that $X$ has no
isolated points. Assume, in order to get a contradiction, that
$f:X^2\longrightarrow X^2$ is a
homeomorphism such that $f[Q^2]=D$.
By Theorem \ref{lavrentiev}, there exists a homeomorphism $g:T\longrightarrow T$
that extends $f$, where $T\supseteq X^2\supseteq Q^2$ is a $\Gd$ subset of
$2^\omega\times 2^\omega$.
Since we enumerated all such homeomorphisms, we must have $g=g_\alpha$ and
$T=T_\alpha$ for some $\alpha <\cccc$. By conditions
$(\ref{kill})$ and $(\ref{disjoint})$, there
exists $(x,y)\in X^2\cap T_\alpha$ such that $\pi_0(g_\alpha(x,y))\notin X$,
contradicting the fact that $g_\alpha(x,y)=g(x,y)=f(x,y)\in X^2$.

\section{Acknowledgements}

The author is grateful to the anonymous referee for spotting
a serious problem in an early version of this article. He also thanks Lyubomyr
Zdomskyy for useful discussions, and in particular for pointing out the footnote
to Theorem \ref{main}.

\end{document}